\newtheorem{theorem}{Theorem}
\theoremstyle{plain}
\newtheorem{definition}{Definition}
\numberwithin{equation}{section}
\begin{document}
\title[On an ntegral as an interval function]{On an integral as an interval
function}
\author{Branko Sari\'{c}}
\address{Faculty of Sciences, University of Novi Sad. Trg Dositeja
Obradovica 2, 21000 Novi Sad, Serbia; College of Technical Engineering
Professional Studies, Svetog Save 65, 32 000 \v{C}a\v{c}ak, Serbia}
\email{saric.b@open.telekom.rs}
\date{September 13, 2014}
\subjclass{Primary 26A06; Secondary 26A24, 26A42}
\keywords{the fundamental theorem of calculus, a residue function }
\thanks{The author's research is supported by the Ministry of Science,
Technology and Development, Republic of Serbia (Project ON 174024)}

\begin{abstract}
Based on the\ total integrability we first define an integral of a real
valued function $f$ as an interval function associated to its antiderivative 
$F$. By introducing the concept of the residue of a function into the real
analysis, the relationship between the so defined integral and the
generalized\textit{\ Riemann} integral is established.
\end{abstract}

\maketitle

\section{Introduction}

An antiderivative of a real-valued function $f$ is just a function $F$ whose
derivative is $f$. The collection of functions $F+C$, where $C$ is an
arbitrary constant known as the constant of integration, is a nonunique
inverse of the derivative $f$. Another way of stating this is that the set
of all antiderivatives $F+C$ is an indefinite integral of $f$. In symbols, $%
\int f\left( x\right) dx=F+C$. So, the opposite process to differentiation
is integration. The fundamental theorem of calculus, more precisely its the
second part, allows definite integrals to be computed in terms of indefinite
integrals. This part of the theorem states that if $F$ is the antiderivative
for $f$, then, under certain conditions, the definite integral of $f$ over a
compact interval $I\subset \mathbb{R}$ is equal to the difference between
the values of an antiderivative $F$ evaluated at the endpoints of the
interval. In symbols, $\int_{I}f\left( x\right) dx=\Delta F\left( I\right) $%
. Here, $\Delta F$ is an associated interval function of $F$, such that $%
\Delta F\left( I\right) =F\left( v\right) -F\left( u\right) $ for any
compact interval $I=\left[ u,v\right] $, \cite{6}. Obviously, if $F$ is
defined on $I$, then the sum of the changes in the value of $F$ over $I$
with any partition is equal to $\Delta F\left( I\right) $. Hence, an attempt
has been made by \textit{Sari\'{c}} \cite{4,5} to define an integral of $f$
over $I$, as the sum of these changes in the value of $F$ over $I$, for
which the \textit{Newton--Leibniz} formula (the second part of the
fundamental theorem of calculus) to be valid unconditionally. The resulting
integral is the so-called total \textit{Kurzweil}-\textit{Henstock}
integral. Accordingly, instead of the set of functions $F+C$ we can use the
associated interval function $\Delta F$ of $F$ to be an integral of $f$. In
symbols, $\int f\left( x\right) dx:=\Delta F$. Therefore, the purpose of
this note is to convert the fundamental theorem of calculus into the
definition of integrability of $f$, as follows.

\begin{definition}
Let $f$ be a real valued function with antiderivative $F$ and let $\mathcal{I%
}$ be any collection of compact intervals $I$ of the real line $\mathbb{R}$.
If $\Delta F:\mathcal{I}\rightarrow \mathbb{R}$ is the associated interval
function of $F$, then $f$ is integrable to $\Delta F\left( I\right) $ on $%
I\in \mathcal{I}$. In symbols, $\int_{I}f\left( x\right) dx=\Delta F\left(
I\right) $.
\end{definition}

When working with functions, which have a finite number of discontinuities
on the compact interval $\left[ a,b\right] \subset \mathbb{R}$, it does not
really matter how these functions will be defined on the set $E$ of
discontinuities. Unless otherwise stated in what follows, we assume that the
endpoints of $[a,b]$ do not belong to $E$. As this situation will arise
frequently, we adopt the convention that such functions are equal to $0$ at
all points at which they have an infinite value ($\pm \infty $) or not be
defined at all. Hence, we may define point functions $F_{ex}:\left[ a,b%
\right] \mapsto \mathbb{R}$ and $f_{ex}:\left[ a,b\right] \mapsto \mathbb{R}$
by extending $F$ and its derivative $f$ from $\left[ a,b\right] \backslash E$
to $E$ by $F_{ex}\left( x\right) =0$ and $f_{ex}\left( x\right) =0$ for $%
x\in E$, so that 
\begin{equation}
F_{ex}\left( x\right) =\left\{ 
\begin{array}{c}
F\left( x\right) \text{, if }x\in \left[ a,b\right] \backslash E \\ 
0\text{, if }x\in E%
\end{array}%
\right. \text{ and}  \label{1}
\end{equation}%
\begin{equation*}
f_{ex}\left( x\right) =\left\{ 
\begin{array}{c}
f\left( x\right) \text{, if }x\in \left[ a,b\right] \backslash E \\ 
0\text{, if }x\in E%
\end{array}%
\right. \text{.}
\end{equation*}

If we denote any generalized \textit{Riemann} integral of~$f_{ex}$ on $I$ by 
$\mathcal{R}-\int_{I}f_{ex}\left( x\right) dx$, including the \textit{Riemann%
} integral itself, then we will prove below the following result $\Delta
F\left( I\right) =\int_{I}f\left( x\right) dx=\mathcal{R}-\int_{I}f_{ex}%
\left( x\right) dx+\Re $, where $\Re $ is the sum of residues of $F$ on the
set $E\subset I$ at whose points $F$ is not differentiable. Clearly, if $%
\mathcal{R}-\int_{I}f_{ex}\left( x\right) dx$ does not exist, then $\mathcal{%
R}-\int_{I}f_{ex}\left( x\right) dx+\Re $ is reduced to the so-called
indeterminate expression $\infty -\infty $ that actually have, in this
situation, the real numerical value of $\Delta F(I)$.

\section{Preliminaries}

Given a compact interval $\left[ a,b\right] $ in $\mathbb{R}$, let the
collection $\mathcal{I}\left( \left[ a,b\right] \right) $ be a family of all
compact subintervals $I$ of $\left[ a,b\right] $. The \textit{Lebesgue}
measure in \thinspace $\mathbb{R}$ is denoted by $\mu $, however, for $%
I\subset \mathbb{R}$ we write $\left\vert I\right\vert =\Delta x\left(
I\right) $ instead of $\mu \left( I\right) $.

A partition $P[a,b]$ of a compact interval $[a,b]\in \mathbb{R}$ is a finite
set (collection) of interval-point pairs $([a_{i},b_{i}],x_{i})_{i\leq \nu }$%
, such that the subintervals $[a_{i},b_{i}]$ are non-overlapping, $\cup
_{i\leq \nu }[a_{i},b_{i}]=[a,b]$ and $x_{i}\in \lbrack a,b]$. The points $%
\{x_{i}\}_{i\leq \nu }$ are the tags of $P[a,b]$, \cite{1}. It is evident
that there are many different ways to arrange the position of the tags $%
x_{i} $ with respect to $[a_{i},b_{i}]$. Each of these positions leads to
one of a \textit{Riemann }type definition of the generalized \textit{Riemann}
integral. If $E$ is a subset of $[a,b]$, then the restriction of $P[a,b]$ to 
$E$ is a finite collection of $([a_{i},b_{i}],x_{i})$ $\in P[a,b]$, such
that each pair of sets $[a_{i},b_{i}]$ and $E$ intersects in at least one
point and all $x_{i}$ are tagged in $E$. In symbols, $P[a,b]|_{E}=%
\{([a_{i},b_{i}],x_{i})\in P[a,b]\mid \lbrack a_{i},b_{i}]\cap E\neq
\emptyset $ and $x_{i}\in E\}$. Given $\delta :[a,b]\mapsto \mathbb{R}_{+}$,
named a gauge, a point-interval pair $([a_{i},b_{i}],x_{i})$ is called $%
\delta $\textit{-fine} if $[a_{i},b_{i}]\subseteq (x_{i}-\delta
(x_{i}),x_{i}+\delta (x_{i}))$. Let $\mathcal{P}[a,b]$ be the family of all
partitions $P[a,b]$ of $[a,b]$. If $E\subseteq \lbrack a,b]$, then for any
position of the tags $x_{i}$ with respect to $[a_{i},b_{i}]$ the family of
all $\delta $\textit{-fine} partitions $P[a,b]$ of $\left[ a,b\right] $,
such that $P[a,b]|_{E}\subset P[a,b]$, denoted by $\mathcal{P}_{\delta }%
\left[ a,b\right] |_{E}$. In what follows we will use the following
notations: $\sum_{i}\Delta F_{ex}(\left[ a_{i},b_{i}\right] )=\Delta F(P%
\left[ a,b\right] |_{E})$ and $\sum_{i}f_{ex}\left( x_{i}\right) \left\vert %
\left[ a_{i},b_{i}\right] \right\vert =\delta F(P\left[ a,b\right] |_{E})$,
whenever $(\left[ a_{i},b_{i}\right] ,x_{i})\in P\left[ a,b\right] |_{E}$.

\begin{definition}
Let $\varphi :\mathcal{I}\left[ a,b\right] \mapsto \mathbb{R}$ be an
arbitrary interval function and $E\subseteq \left[ a,b\right] $. A point
function $f:\left[ a,b\right] \mapsto \mathbb{R}$ is the limit of $\phi $ on 
$\left[ a,b\right] \backslash E$ if for every $\varepsilon >0$ there exists
a gauge $\delta :[a,b]\mapsto \mathbb{R}_{+}$, such that 
\begin{equation}
\left\vert \varphi ([a_{i},b])-f\left( x_{i}\right) \right\vert <\varepsilon 
\text{,}  \label{2}
\end{equation}%
whenever $([a_{i},b_{i}],x_{i})\in P\left[ a,b\right] \backslash P\left[ a,b%
\right] |_{E}$ and $P\left[ a,b\right] \in \mathcal{P}_{\delta }\left[ a,b%
\right] |_{E}$.
\end{definition}

Given a derivative-antiderivative pair ($f$ and $F$), the derivative $f$ is
the limit of the interval function 
\begin{equation}
\varphi \left( I\right) =\frac{\Delta F\left( I\right) }{\Delta x\left(
I\right) }=\frac{1}{\Delta x\left( I\right) }\int_{I}f\left( x\right) dx%
\text{,}  \label{3}
\end{equation}%
where $\Delta F\left( I\right) $ is the associated interval function of $F$.

\section{Main results}

Let $F:[a,b]\mapsto \mathbb{R}$. It is an old result that $F$ is continuous
on $[a,b]$ if and only if the associated interval function $\Delta F$ of $F$
converges to $0$ at all points of $\left[ a,b\right] $, \cite{3}.
Accordingly, we are now in a position to define the linear differential form
on $\left[ a,b\right] $.

\begin{definition}
For $F:\left[ a,b\right] \mapsto \mathbb{R}$ let $\varphi \ $be defined by $%
( $\ref{3}$)$. Then, $dF=fdx$ as the limit of the interval function 
\begin{equation}
\Delta F(I)=\int_{I}f\left( x\right) dx=\varphi (I)\Delta x(I)  \label{4}
\end{equation}%
on $\left[ a,b\right] $ is a linear differential form on $\left[ a,b\right] $%
.
\end{definition}

Clearly, if $F$ is continuous on $[a,b]$ then $dF=fdx$ vanishes identically
on $[a,b]$. In case $F$ is differentiable to $f$ everywhere on $[a,b]$
except for a set $E\subset \lbrack a,b]$ of \textit{Lebesgue} measure zero,
we can introduce into the analysis an interval-point function $\delta
F:[a,b]\times \mathcal{I}\left( \left[ a,b\right] \right) \mapsto \mathbb{R}$
being the product of the point function $f_{ex}$ defined by (\ref{1}) and
the interval function $\Delta x$, as follows%
\begin{equation}
\delta F(I,x)=f_{ex}\left( x\right) \Delta x(I)\text{.}  \label{5}
\end{equation}

As we can see, there is a difference between the interval-point function $%
\delta F(I,x)$ and the interval function $\Delta F(I)$, as well as between
their limits on $E$. However, by \textit{Definition 2}, since $f$ is the
limit of $\varphi $ on $\left[ a,b\right] \backslash E$, given $\varepsilon
>0$ there exists a gauge $\delta :[a,b]\mapsto \mathbb{R}_{+}$, such that 
\begin{equation}
\left\vert \delta F([a_{i},b_{i}],x_{i})-\Delta F([a_{i},b_{i}])\right\vert
<\varepsilon \Delta x([a_{i},b_{i}])\text{,}  \label{6}
\end{equation}%
whenever $([a_{i},b_{i}],x_{i})\in P\left[ a,b\right] \backslash P\left[ a,b%
\right] |_{E}$ and $P\left[ a,b\right] \in \mathcal{P}_{\delta }\left[ a,b%
\right] |_{E}$. So, in this emphasized case $fdx$ is the limit of both $%
\delta F$ and $\Delta F$ on $[a,b]\backslash E$.

Remember, there are many different ways to arrange the position of the tags $%
x_{i}$ with respect to $[a_{i},b_{i}]$, each of which leads to one type of
the generalized \textit{Riemann} integral defined by the following
definition.

\begin{definition}
For $\left[ a,b\right] \in \mathcal{I}\left[ a,b\right] $ let $E\subset %
\left[ a,b\right] $ be \textit{a set of Lebesgue measure zero} at whose
points a real valued function $f$ is not defined. A point function $%
f_{ex}:[a,b]\mapsto R$ is generalized Riemann integrable to a real point $%
\mathcal{F}$ on $\left[ a,b\right] $ if for every $\varepsilon >0$ there
exists a gauge $\delta :[a,b]\mapsto \mathbb{R}_{+}$, such that%
\begin{equation}
\left\vert \delta F(P\left[ a,b\right] )-\mathcal{F}\right\vert <\varepsilon 
\text{,}  \label{7}
\end{equation}%
whenever$P\left[ a,b\right] \in \mathcal{P}_{\delta }\left[ a,b\right] |_{E}$%
. In symbols, $\mathcal{F}:=$ $\mathcal{R}-\int_{a}^{b}f\left( x\right) dx$.
\end{definition}

If $x_{i}\in \lbrack a_{i},b_{i}]$ and the gauge $\delta (x)$ has a positive
infimum on $[a,b]$, then the previous definition\ becomes that of the
ordinary \textit{Riemann} integral.

The following two definitions introduce the concept of the residue of a
function into the real analysis.

\begin{definition}
For $\left[ a,b\right] \in \mathcal{I}\left[ a,b\right] $ let $E\subset %
\left[ a,b\right] $ be \textit{a set of Lebesgue measure zero} at whose
points a real valued function $F$ is not defined. The function $F$ is said
to be basically summable $($BS$_{\delta })$ on $E$ to a real number $\Re $,
if for every $\varepsilon >0$ there exists a gauge $\delta :[a,b]\mapsto 
\mathbb{R}_{+}$, such that $\left\vert \Delta F(P\left[ a,b\right]
|_{E})-\Re \right\vert <\varepsilon $, whenever $P\left[ a,b\right] \in 
\mathcal{P}_{\delta }\left[ a,b\right] |_{E}$. If in addition $E$ can be
written as a countable union of sets on each of which $F$ is BS$_{\delta }$,
then $F$ is said to be BSG$_{\delta }$ on $E$. In symbols, $\Re :=\sum_{x\in
E}f\left( x\right) dx$.
\end{definition}

If $F$ is absolutely continuous on $\left[ a,b\right] $, that means it has
negligible variation on $E$, then $\Re $ is equal to zero, \cite{1}.

\begin{definition}
The linear differential form $dF=fdx$ is a residue function of $F$. In
symbols, $\mathfrak{R}:=dF$.
\end{definition}

Obviously, the residue function of $F$ being basically summable $($BS$%
_{\delta })$ on $E\subset \left[ a,b\right] $ to a real number $\Re $ is a
null function on $\left[ a,b\right] $ (\textit{A function }$F:[a,b]\mapsto 
\mathbb{R}$\textit{\ is said to be a null function on }$[a,b]$,\textit{\ if
the set }$\{x\in \lbrack a,b]\mid F\left( x\right) \neq 0\}$\textit{\ is a
set of Lebesgue measure zero}, see \textit{2.4 Definition} in \cite{1}) and%
\begin{equation}
\Re =\sum_{x\in E}\mathfrak{R}\left( x\right) \text{.}  \label{12}
\end{equation}%
On the other hand, for some compact interval $\left[ a,b\right] \in \mathcal{%
I}\left[ a,b\right] $ the infinite sum $\sum_{x\in \left[ a,b\right] }%
\mathfrak{R}\left( x\right) $ is in fact the integral of $f$ on $\left[ a,b%
\right] $ since the antiderivative $F$ of $f$ is by \textit{Definition 5 }%
basically summable $($BS$_{\delta })$ on $\left[ a,b\right] $ to $\Delta
F\left( \left[ a,b\right] \right) $, so that%
\begin{equation}
\Delta F\left( \left[ a,b\right] \right) =\sum_{x\in \left[ a,b\right] }%
\mathfrak{R}\left( x\right) \text{.}  \label{13}
\end{equation}%
In case when $F$ has a certain number of discontinuities within $\left[ a,b%
\right] $, gathered together into the set $E\subset \left[ a,b\right] $, at
which its derivative $f$ can take values $\pm \infty $ or not be defined at
all, the sum $\sum_{x\in \left[ a,b\right] \backslash E}\mathfrak{R}\left(
x\right) $ reduces to the sum $\sum_{x\in \left[ a,b\right] }f_{ex}\left(
x\right) dx=\mathcal{R}-\int_{a}^{b}f_{ex}\left( x\right) dx$, since $%
f_{ex}dx$ is the limit of $\delta F$ on $\left[ a,b\right] $. Hence, if we
split the sum $\sum_{x\in \left[ a,b\right] }\mathfrak{R}\left( x\right) $
into two sums of $\mathfrak{R}\left( x\right) $ over two separate intervals $%
\left[ a,b\right] \backslash E$ and $E$, then we finally obtain that 
\begin{equation}
\int_{a}^{b}f\left( x\right) dx=\mathcal{R}-\int_{a}^{b}f_{ex}\left(
x\right) dx+\Re \text{.}  \label{14}
\end{equation}%
In what follows we shall formulate the result (\ref{14}) as a theorem and
prove it explicitly.

\begin{theorem}
For a some compact interval\ $\left[ a,b\right] \in \mathbb{R}$ let $%
E\subset \left[ a,b\right] $ be a set of \textit{Lebesgue} measure zero at
whose points a real valued function $F$ defined and differentiable on $\left[
a,b\right] \backslash E$ and its derivative $f$ can take values $\pm \infty $
or not be defined at all. If $F$ is basically summable $($BS$_{\delta })$ on 
$E$ to $\mathbf{\Re }$, then $f_{ex}$ is generalized Riemann integrable on $%
\left[ a,b\right] $ and%
\begin{equation}
\int_{a}^{b}f\left( x\right) dx=\mathcal{R}-\int_{a}^{b}f_{ex}\left(
x\right) dx+\Re \text{.}  \label{15}
\end{equation}
\end{theorem}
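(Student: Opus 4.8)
The plan is to establish the identity (\ref{15}) by showing that the interval function $\Delta F$ over $[a,b]$ decomposes additively into a contribution from $[a,b]\backslash E$, which reproduces the generalized Riemann integral of $f_{ex}$, and a contribution from $E$, which by hypothesis equals $\Re$. The central mechanism is the basic summability of $F$ on $E$ together with the fact, recorded in (\ref{6}), that $f_{ex}dx$ is the limit of both the interval-point function $\delta F$ and the interval function $\Delta F$ on $[a,b]\backslash E$.

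**First I would** fix $\varepsilon>0$ and invoke two gauges. By \emph{Definition 5} (basic summability of $F$ on $E$ to $\Re$) there is a gauge $\delta_{1}$ such that $\left\vert\Delta F(P[a,b]|_{E})-\Re\right\vert<\varepsilon$ for every $P[a,b]\in\mathcal{P}_{\delta_{1}}[a,b]|_{E}$. By the limit relation (\ref{6}), applied to the pairs lying \emph{outside} the restriction $P[a,b]|_{E}$, there is a gauge $\delta_{2}$ such that $\left\vert\delta F([a_{i},b_{i}],x_{i})-\Delta F([a_{i},b_{i}])\right\vert<\varepsilon\,\Delta x([a_{i},b_{i}])$ for each such pair. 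Taking $\delta=\min\{\delta_{1},\delta_{2}\}$ gives a single gauge governing both estimates simultaneously on any $P[a,b]\in\mathcal{P}_{\delta}[a,b]|_{E}$.

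**Next I would** write the telescoping additivity of $\Delta F$ over a $\delta$-fine partition,
\begin{equation*}
\Delta F([a,b])=\sum_{i}\Delta F([a_{i},b_{i}])=\Delta F(P[a,b]|_{E})+\sum_{(\,[a_{i},b_{i}],x_{i})\notin P[a,b]|_{E}}\Delta F([a_{i},b_{i}]),
\end{equation*}
using that the subintervals are non-overlapping with union $[a,b]$ and that $\Delta F([u,w])=F(w)-F(u)$ telescopes. On the pairs outside $P[a,b]|_{E}$ I would replace each $\Delta F([a_{i},b_{i}])$ by $\delta F([a_{i},b_{i}],x_{i})=f_{ex}(x_{i})\,\Delta x([a_{i},b_{i}])$, the total error being bounded by $\varepsilon\sum_{i}\Delta x([a_{i},b_{i}])\le\varepsilon(b-a)$ via (\ref{6}); this sum of $\delta F$-terms is precisely the generalized Riemann sum $\delta F(P[a,b])$ minus its part over $E$. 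Combining this with the summability estimate for the $E$-part, and letting $\varepsilon\to0$, simultaneously shows that $f_{ex}$ is generalized Riemann integrable in the sense of \emph{Definition 4} and that $\Delta F([a,b])=\mathcal{R}-\int_{a}^{b}f_{ex}(x)\,dx+\Re$, which is (\ref{15}) after identifying $\int_{a}^{b}f(x)\,dx$ with $\Delta F([a,b])$ by \emph{Definition 1}.

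**The hard part will be** handling the bookkeeping of the set $E$ cleanly, since on pairs $([a_{i},b_{i}],x_{i})$ belonging to $P[a,b]|_{E}$ the quantity $f_{ex}(x_{i})$ is zero (as $x_{i}\in E$), so these tags contribute nothing to the Riemann sum $\delta F(P[a,b])$ even though $\Delta F$ over those same subintervals carries the entire $\Re$-contribution; I must verify that this asymmetry is exactly what produces the additive split $\delta F(P[a,b])=\bigl(\text{sum over }[a,b]\backslash E\bigr)$ and that no subinterval is double-counted. A subtle point is that $E$ has Lebesgue measure zero but need not be finite, so the summability hypothesis is genuinely needed to control the $E$-part; the measure-zero condition guarantees, via a standard $\delta$-fine cover of $E$, that the collection of subintervals meeting $E$ can be taken of arbitrarily small total length, which is what legitimizes treating the outside sum as a full Riemann sum for $f_{ex}$ in the limit.
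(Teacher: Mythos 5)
Your proposal is correct and follows essentially the same route as the paper's own proof: both split the $\delta$-fine partition into the pairs tagged in $E$ (controlled by the BS$_{\delta}$ hypothesis) and the remaining pairs (controlled by the estimate (\ref{6})), use that $f_{ex}$ vanishes on $E$ so the $E$-tagged pairs contribute nothing to the Riemann sum, and conclude via Definition 4 that $\mathcal{R}-\int_{a}^{b}f_{ex}(x)\,dx=\Delta F([a,b])-\Re$. The only cosmetic difference is that you take the minimum of two gauges explicitly where the paper tacitly uses a single gauge serving both estimates.
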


\begin{proof}
Let~$F_{ex}$ and $f_{ex}$ be defined by (\ref{1}). Since $F$ is BS$_{\delta }
$ on $E$ to $\mathbf{\Re }$ it follows from \textit{Definition 5 }that for
every $\varepsilon >0$ there exists a gauge $\delta :[a,b]\mapsto \mathbb{R}%
_{+}$, such that $\left\vert \Delta F(P\left[ a,b\right] |_{E})-\Re
\right\vert <\varepsilon $, whenever $P\left[ a,b\right] \in \mathcal{P}%
_{\delta }\left[ a,b\right] |_{E}$. In addition, $f_{ex}\left( x\right)
\equiv 0$ on $E$ and $\Delta F(P\left[ a,b\right] )=\Delta F\left( \left[ a,b%
\right] \right) $ whenever $P\left[ a,b\right] \in \mathcal{P}\left[ a,b%
\right] $. Hence, by the result (\ref{6}), for every $\varepsilon >0$ there
exists a gauge $\delta :[a,b]\mapsto \mathbb{R}_{+}$, such that 
\begin{equation*}
\left\vert \delta F(P\left[ a,b\right] )-[\Delta F\left( \left[ a,b\right]
\right) -\Re ]\right\vert \leq 
\end{equation*}%
\begin{equation*}
\leq \left\vert \delta F(P\left[ a,b\right] \backslash P\left[ a,b\right]
|_{E})-\Delta F(P\left[ a,b\right] \backslash P\left[ a,b\right]
|_{E})\right\vert +
\end{equation*}%
\begin{equation*}
+\left\vert \Delta F(P\left[ a,b\right] |_{E})-\Re \right\vert <\varepsilon
\left( \left\vert \left[ a,b\right] \right\vert +1\right) \text{,}
\end{equation*}%
whenever $P\left[ a,b\right] \in \mathcal{P}_{\delta }\left[ a,b\right] |_{E}
$. So, by \textit{Definition 4} $f_{ex}$ is generalized \textit{Riemann}
integrable on $\left[ a,b\right] $ and $\mathcal{R}-\int_{a}^{b}f_{ex}\left(
x\right) dx=\Delta F\left( \left[ a,b\right] \right) -\Re $, that is, 
\begin{equation*}
\int_{a}^{b}f\left( x\right) dx=\mathcal{R}-\int_{a}^{b}f_{ex}\left(
x\right) dx+\mathbf{\Re }\text{.}
\end{equation*}
\end{proof}

This result provides an extension of \textit{Cauchy's} result from the
calculus of residues in $\mathbb{R}$ (compare with results in \cite{4}).

\section{Example}

Let $C:\left[ 0,1\right] \mapsto $ $\mathbb{R}$ be the\textit{\ Cantor}
function, \cite{2}. Its derivative $c$ is a null function on $\left[ 0,1%
\right] $ that is not defined on the \textit{Cantor} set $\mathcal{C}$.
Since the generalized \textit{Riemann} integral of $c_{ex}$ $:\left[ 0,1%
\right] \mapsto $ $0$ on $\left[ 0,1\right] $ is equal to zero it follows
from the \textit{Theorem 1} that%
\begin{equation*}
\mathbf{\Re =}\int_{0}^{1}c\left( x\right) dx-\mathcal{R}-\int_{0}^{1}c_{ex}%
\left( x\right) dx=\Delta C\left( \left[ 0,1\right] \right) -0=1\text{.}
\end{equation*}

So, the sum of the changes in the value of $C$ over $\mathcal{C}$ is reduced
to the so-called indeterminate expression $\infty \cdot 0$ (the residue
function $\mathfrak{R}$ of $C$ vanishes identically on $\left[ 0,1\right] $
because $C$ is continuous on $\left[ 0,1\right] $), that actually have, in
this situation, the real numerical value of $1$ (it means that $C$ is not
absolutely continuous and has no negligible variation on $\mathcal{C}$).
Let's prove it once more. For the\textit{\ Cantor} function with the total
length of $2$ on $\left[ 0,1\right] $ the total length of all line segments
contained within $\left[ 0,1\right] \backslash \mathcal{C}$, on each of
which $C$ is constant, is as follows 
\begin{equation*}
\frac{1}{2}\sum_{n=1}^{+\infty }(\frac{2}{3})^{n}=\frac{1}{2}(3-1)=1\text{.}
\end{equation*}%
Hence, the sum of the changes in the value of $C$ over $\mathcal{C}$, is
equal to $2-1$.

\end{document}